\DeclareMathOperator{\rank}{rank}
\DeclareMathOperator{\dif}{d}
\newcommand{\Cal}{\mathcal{C}}
\renewcommand{\H}{\mathscr{H}}
\newcommand{\F}{\mathscr{F}}
\newcommand{\Fa}{\mathcal{F}}
\newcommand{\ol}{\mathcal{O}}
\def \a{\alpha}
\def \b{\beta}
\def \phi{\varphi}
\def \Phi{\varPhi}
\def \p{\pi}
\def \r{\rho}
\def \s{\sigma}
\def \t{\tau}
\def \R{\mathbb{R}}
\def \Hq{\mathbb{H}\,}
\def \C{\mathbb{C}\,}
\def\widecheckg{g^{\hspace*{-2.5pt}\vbox to 5pt{\hbox to
0pt{\LARGE$\check{}$}}}\hspace*{2pt}}
\def\widecheckl{\lambda^{\hspace*{-3.5pt}\vbox to 8pt{\hbox to
0pt{\LARGE$\check{}$}}}\hspace*{2pt}}
\begin{document}

\title{On the twistor space of a\\ 
(co-)CR quaternionic manifold} 
\author{Radu Pantilie} 
\email{\href{mailto:radu.pantilie@imar.ro}{radu.pantilie@imar.ro}}
\address{R.~Pantilie, Institutul de Matematic\u a ``Simion~Stoilow'' al Academiei Rom\^ane,
C.P. 1-764, 014700, Bucure\c sti, Rom\^ania}
\subjclass[2010]{Primary 53C28, Secondary 53C26}

\newtheorem{thm}{Theorem}[section]
\newtheorem{lem}[thm]{Lemma}
\newtheorem{cor}[thm]{Corollary}
\newtheorem{prop}[thm]{Proposition}

\theoremstyle{definition}

\newtheorem{defn}[thm]{Definition}
\newtheorem{rem}[thm]{Remark}
\newtheorem{exm}[thm]{Example}

\numberwithin{equation}{section}

\maketitle
\thispagestyle{empty}
\vspace{-5mm}
\section*{Abstract}
\begin{quote}
{\footnotesize 
We characterise, in the setting of the Kodaira--Spencer deformation theory, the twistor spaces of (co-)CR quaternionic manifolds. 
As an application, we prove that, locally, the leaf space of any nowhere zero quaternionic vector field on a quaternionic manifold is 
endowed with a natural co-CR quaternionic structure.\\ 
\indent 
Also, for any positive integers $k$ and $l$\,, with $kl$ even, we obtain the geometric objects whose twistorial counterparts are complex manifolds 
endowed with a conjugation without fixed points and which preserves an embedded Riemann sphere  
with normal bundle~$l\ol(k)$\,.\\ 
\indent 
We apply these results to prove the existence of natural classes of co-CR quaternionic manifolds.}
\end{quote}

\section*{Introduction} 

\indent 
Twistor Theory is based on a, nontrivial and far from being complete, dictionary between differential geometric and holomorphic objects. 
For example (see \cite{Hit-complexmfds}\,), it is well-known that (up to a conjugation) any \emph{anti-self-dual manifold} 
corresponds to a three-dimensional complex manifold endowed with a locally complete family of (embedded) Riemann spheres with normal bundle $2\ol(1)$ 
(where $\ol(1)$ is the dual of the tautological bundle over $\C\!P^1$). More generally, any \emph{quaternionic manifold} of dimension $4k$ 
corresponds to a complex manifold of dimension $2k+1$ endowed with a locally complete family of Riemann spheres with normal bundle $2k\ol(1)$\,. 
Another such natural correspondence is given by the \emph{three-dimensional Einstein--Weyl spaces} which, locally, correspond to complex 
surfaces endowed with locally complete families of Riemann spheres with normal bundle $\ol(2)$\,.\\ 
\indent 
All these correspondences involve two steps: (1) the construction of a \emph{twistor space} for each of the given differential geometric structures, and 
(2) the characterisation (among the complex manifolds endowed with a family of compact complex submanifolds) of the obtained twistor spaces.\\  
\indent 
In \cite{fq_2}\,, it is shown that step (1) of all of the above mentioned correspondences are particular cases 
of the construction by which to any \emph{co-CR quaternionic manifold} it is associated its twistor space.\\ 
\indent 
In this paper, we provide the corresponding step (2) for a large class of co-CR quaternionic manifolds, thus showing that in order for 
a complex manifold $Z$ to be the twistor space of a co-CR quaternionic manifold it is sufficient to be endowed 
with a locally-complete family $\Fa$ of Riemann spheres which (up to the restrictions imposed by a conjugation; 
see Theorem \ref{thm:charact_twist_co-cr_q}\,, below, for details) satisfies  
the following: (a) for any $t\in\Fa$, its (holomorphic) normal bundle $Nt$ is `positive' and the exact sequence 
$0\longrightarrow Tt\longrightarrow TZ|_t\longrightarrow Nt\longrightarrow 0$ splits, and (b) $\dim H^1(t,Tt\otimes N^*t)$ is independent of $t\in\Fa$. 
Note that, by \cite{fq_2}\,, condition (a) is, also, necessary for $Z$ to be the twistor space of a co-CR quaternionic manifold, with $\Fa$ the corresponding family 
of twistor lines.\\ 
\indent 
Consequently, up to a complexification, the `Veronese webs' of \cite{GelZak-91} and the `generalized hypercomplex structures' of \cite{Bie-TAMS06} 
are particular classes of co-CR quaternionic structures. Furthermore, it follows that the `bi-Hamiltonian structures' corresponding \cite{GelZak-91} 
to the former are obtained through a dimensional reduction of a hypercomplex structure.\\ 
\indent 
As an application of Theorem \ref{thm:charact_twist_co-cr_q}\,, we prove (Corollary \ref{cor:qvf}\,) that, locally, the leaf space 
of any nowhere zero quaternionic vector field on a quaternionic manifold is endowed with a natural co-CR quaternionic structure.\\ 
\indent 
We, also, provide (Theorem \ref{thm:charact_twist_cr_q}\,) the corresponding step (2) for the dually flavoured \emph{CR~quaternionic manifolds}, 
introduced in \cite{fq}\,.\\  
\indent 
Finally, a similar approach leads to (Corollary \ref{cor:qg_sR}\,) 
a natural correspondence between the following classes, where $k$ and $l$ are positive integers:\\ 
\indent 
\quad{\rm (i)} Complex manifolds endowed with a conjugation without fixed points and which preserves an embedded Riemann sphere  
with normal bundle $l\ol(k)$\,.\\ 
\indent 
\quad{\rm (ii)} Quadruples $(M,N,x,\phi)$\,, with $x\in M\subseteq N$, $N$ quaternionic, $M\subseteq N$ generic and of type $(k,l)$ 
(see Definition \ref{defn:type_cr_q}\,),  and $\phi:N\to M$ a twistorial retraction of the inclusion $M\subseteq N$.\\ 
\indent 
We apply these results to prove the existence of natural classes of co-CR quaternionic manifolds.\\ 
\indent 
I am grateful to Stefano Marchiafava for vey useful and enjoyable discussions.

\section{(Co-)CR quaternionic manifolds} 

\indent 
In this section we recall, from \cite{fq} and \cite{fq_2} to which we refer for further details, the notions of CR and co-CR quaternionic manifolds.\\ 
\indent 
Let $\Hq$ be the (unital) associative algebra of quaternions; note that, its automorphism group is ${\rm SO}(3,\R)$ acting trivially on $\R$ and canonically on 
${\rm Im}\Hq(=\R^3)$\,.\\ 
\indent 
A \emph{linear quaternionic structure} on a (real) vector space $E$ is an equivalence class of morphisms of associative algebras from $\Hq$ to ${\rm End}(E)$\,, 
where to such morphisms $\s_1$ and $\s_2$ are equivalent if $\s_2=\s_1\circ a$\,, for some $a\in{\rm SO}(3,\R)$\,.\\ 
\indent 
Let $E$ be a quaternionic vector space whose structure is given by the morphism $\s:\Hq\to{\rm End}(E)$\,. Then $Z=\s(S^2)$ 
depends only of the equivalence class of $\s$\,. Moreover, $Z$ determines the linear quaternionic structure of $E$. Also, any $J\in Z$ 
is a linear complex structure on $E$ which is called \emph{admissible} (for the given linear quaternionic structure).\\ 
\indent 
A \emph{linear CR quaternionic structure} on a vector space $U$ is a pair $(E,\iota)$\,, where $E$ is a quaternionic vector space and 
$\iota:U\to E$ is an injective linear map such that, for any admissible linear complex structure $J$ on $E$, we have 
${\rm im}\,\iota+J({\rm im}\,\iota)=E$.\\ 
\indent 
By duality we obtain the notion of \emph{linear co-CR quaternionic structure}.\\ 
\indent 
Any quaternionic vector space $E$ is isomorphic to $\Hq^{\!k}$, where $\dim E=4k$\,, and the automorphism group of $\Hq^{\!k}$ is ${\rm Sp}(1)\cdot{\rm GL}(k,\Hq)$\,.\\ 
\indent 
The classification of the (co-)CR quaternionic vector spaces is much less trivial. It is based on a covariant functor from the category of CR quaternionic vector spaces 
to the category of holomorphic vector bundles over the Riemann spheres, which we next describe. 
Let $(U,E,\iota)$ be a CR quaternionic vector space and let $Z$ be the space of admissible linear complex structures on $E$. For any $J\in Z$ 
denote $E^J={\rm ker}(J+{\rm i})$\,. Then $E^{0,1}=\bigcup_{J\in Z}\{J\}\times E^J$ is a holomorphic vector subbundle of $Z\times E^{\C}$ 
(isomorphic to $2k\ol(-1)$\,, where $\dim E=4k$ and $\ol(-1)$ is the tautological line bundle on $Z=\C\!P^1$). Furthermore, $\iota^{-1}\bigl(E^{0,1}\bigr)$ 
is a holomorphic vector subbundle of $Z\times U^{\C}$ which is called \emph{the holomorphic vector bundle of $(U,E,\iota)$} \cite{fq}\,. 
It follows that there exists a natural bijective correspondence between (isomorphism classes of) CR quaternionic vector spaces and holomorphic vector bundles, 
over the Riemann sphere, whose Birkhoff--Grothendieck decompositions contain only terms whose Chern numbers are at most $-1$\,.\\ 
\indent 
A \emph{quaternionic vector bundle} is a real vector bundle $E$ with typical fibre $\Hq^{\!k}$ and structural group ${\rm Sp}(1)\cdot{\rm GL}(k,\Hq)$\,. 
If $E$ is a quaternionic vector bundle then the space $Z$ of admissible linear complex structures on $E$ is a (locally trivial) fibre bundle with typical fibre $S^2$ 
and structural group ${\rm SO}(3,\R)$\,.\\ 
\indent 
An \emph{almost CR quaternionic structure} on a manifold $M$ is a pair $(E,\iota)$\,, where $E$ is a quaternionic vector bundle over $M$ and 
$\iota:TM\to E$ is a vector bundles morphism such that $(E_x,\iota_x)$ defines a linear CR quaternionic structure on $T_xM$, for any $x\in M$.\\ 
\indent 
By duality we obtain the notion of \emph{almost co-CR quaternionic structure}.\\ 
\indent 
Let $(M,E,\iota)$ be an almost CR quaternionic manifold. Suppose that $E$ is endowed with a compatible connection $\nabla$ and let 
$\p:Z\to M$ be the bundle of admissible linear complex structures on $E$. Denote by $\mathcal{B}$ the complex distribution on $Z$ 
whose fibre, at each $J\in Z$, is the horizontal lift, with respect to $\nabla$, of $\iota^{-1}\bigl({\rm ker}(J+{\rm i})\bigr)$\,. 
Then $\Cal=({\rm ker}\dif\!\p)^{0,1}\oplus\mathcal{B}$ is an almost CR structure on $Z$. If $\Cal$ is integrable then 
$(E,\iota,\nabla)$ is a \emph{CR quaternionic structure} on $M$.\\ 
\indent 
A \emph{quaternionic manifold} is a CR quaternionic manifold $(M,E,\iota,\nabla)$ for which $\iota$ is an isomorphism.\\ 
\indent 
Let $N$ be a quaternionic manifold. A submanifold $M\subseteq N$ is \emph{generic} if $(TN|_M,\iota)$ is an almost CR quaternionic structure on $M$, 
where $\iota:TM\to TN|_M$ is the inclusion; in particular, ${\rm codim}\,M\leq 2k-1$, where $\dim N=4k$\,. The terminology is justified by the fact that the 
set of real vector subspaces $U\subseteq\Hq^{\!k}$, of fixed codimension $l\leq2k-1$ and on which $\Hq^{\!k}$ induces a linear CR quaternionic structure, 
is open in the Grassmannian of real vector subspaces of codimension $l$ of $\Hq^{\!k}$. Also, $M\subseteq N$ is generic if and only if, 
for any admissible local complex structure $J$ on $N$, we have that $M$ is a generic CR~submanifold of $(N,J)$\,.\\ 
\indent 
Any hypersurface of a quaternionic manifold is generic, but this does not hold for higher codimensions as simple examples show 
(take, for example, $\Hq^{\!k}\times\C$ in $\Hq^{\!k+1}$).\\ 
\indent 
Any generic submanifold of a quaternionic manifold inherits a natural CR quaternionic structure. Conversely, 
any real analytic CR quaternionic structure is obtained this way from a germ unique embedding into a quaternionic manifold.\\ 
\indent 
Let $(M,E,\r)$ be an almost co-CR quaternionic manifold. Suppose that $E$ is endowed with a compatible connection $\nabla$ and let 
$\p:Z\to M$ be the bundle of admissible linear complex structures on $E$. Denote by $\mathcal{B}$ the complex distribution on $Z$ 
whose fibre, at each $J\in Z$, is the horizontal lift, with respect to $\nabla$, of $\r\bigl({\rm ker}(J+{\rm i})\bigr)$\,. 
Then $\Cal=({\rm ker}\dif\!\p)^{0,1}\oplus\mathcal{B}$ is an almost co-CR structure on $Z$; that is, $\Cal+\overline{\Cal}=T^{\C\!}Z$. 
If $\Cal$ is integrable (that is, its space of sections is closed under the usual bracket) then 
$(E,\iota,\r)$ is a \emph{co-CR quaternionic structure} on $M$. Suppose, further, that $\Cal\cap\overline{\Cal}=({\rm ker}\dif\!\p_Y)^{\C}$, 
where $\p_Y:Z\to Y$  is a surjective submersion whose restriction to each fibre of $\p$ is injective, and with respect to which $\Cal$ is projectable 
(note that, the last condition is automatically satisfied if $\p_Y$ has connected fibres). Then $Y$ endowed with $\dif\!\p_Y(\Cal)$  
is a complex manifold, called the \emph{twistor space} of $(M,E,\r,\nabla)$\,; in particular, $\p_Y$ restricted to each fibre of $\p$ 
is an injective holomorphic immersion.

\section{On the twistor space of a CR quaternionic manifold}

\indent 
Recall \cite{KodSpe-I_II} that a \emph{smooth family of complex manifolds} is a surjective submersion $\p:Z\to M$ whose 
domain is endowed with a CR structure $\mathcal{D}$ such that $\mathcal{D}\oplus\overline{\mathcal{D}}=({\rm ker}\dif\!\p)^{\C\!}$. 
Any member of the family is a fibre $t$ of $Z$ endowed with the complex structure for which $T^{0,1}t=\mathcal{D}|_t$\,.\\ 
\indent 
We shall denote by $\mathcal{S}$ the generating subsheaf of $T^{\C\!}Z/({\rm ker}\dif\!\p)^{0,1}$ formed 
of the complex vector fields on $Z$ which are projectable with respect to $\p$ and holomorphic when restricted to the fibres of $Z$. 
Note that,  the exact sequence  
$$0 \longrightarrow ({\rm ker}\dif\!\p)^{1,0} \longrightarrow \mathcal{S} \longrightarrow \mathcal \p^*(T^{\C\!}M) \longrightarrow 0$$
is the \emph{fundamental sequence} \cite[(4.1)]{KodSpe-I_II}\,. 
 
\begin{thm} \label{thm:charact_twist_cr_q}
Let $\p:Z\to M$ be a smooth family of Riemann spheres.  
Suppose that $Z$ is endowed with a CR structure $\Cal$ and an involutive diffeomorphism $\t$ such that:\\ 
\indent 
\quad{\rm (i)} $\Cal$ induces the given complex structure on each fibre of $Z$\,;\\ 
\indent 
\quad{\rm (ii)} For each fibre $t$ of $Z$\,, we have that $(\Cal|_t)/T^{0,1}t$ is a holomorphic subbundle 
of the restriction to $t$ of $T^{\C\!}Z/({\rm ker}\dif\!\p)^{0,1}$ such that the induced quotient of $\bigl(T^{\C\!}Z/\Cal\bigr)|_t$ through $T^{1,0}t$ 
is isomorphic to $k\ol(1)$\,, for some positive integer $k$\,;\\ 
\indent 
\quad{\rm (iii)} $\t$ is anti-CR with respect to $\Cal$, preserves the fibres of $Z$ and has no fixed points.\\ 
\indent 
Then there exists a CR quaternionic structure $(E,\iota,\nabla)$ on $M$ whose twistor space is $(Z,\Cal)$\,; 
moreover, $(E,\iota)$ is unique (up to isomorphisms) with these properties.
\end{thm}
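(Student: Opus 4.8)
The plan is to invert, fibre by fibre, the twistorial construction of Section~1: from the holomorphic bundles that $\Cal$ and $\t$ induce on the fibres of $\p$ I would read off a linear CR~quaternionic structure at each point of $M$, and then assemble these into the bundle $E$, the morphism $\iota$ and a compatible connection $\nabla$. Fix $x\in M$ and put $t=\p^{-1}(x)$, a Riemann sphere by~(i). Restricting the fundamental sequence to $t$ presents $\mathcal{S}|_t$ as the holomorphic extension
$$0\longrightarrow T^{1,0}t\longrightarrow\mathcal{S}|_t\longrightarrow T_x^{\C}M\otimes\ol\longrightarrow0\,,$$
with $T^{1,0}t=\ol(2)$ and $(\ker\dif\!\p)^{0,1}|_t=T^{0,1}t$\,. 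By~(ii), $\dif\!\p$ carries $(\Cal|_t)/T^{0,1}t$ isomorphically onto a holomorphic subbundle $\mathcal{H}_x\subseteq T_x^{\C}M\otimes\ol$ with $(T_x^{\C}M\otimes\ol)/\mathcal{H}_x\cong k\ol(1)$\,. Accordingly I set
$$E_x^{\C}:=H^0\!\bigl(t\,,\,(T_x^{\C}M\otimes\ol)/\mathcal{H}_x\bigr)\,,$$
of complex dimension $2k$, and let $\iota_x^{\C}:T_x^{\C}M\to E_x^{\C}$ be the map induced on global sections by the quotient projection. The splitting $k\ol(1)\cong\ol(1)\otimes\C^{\,k}$ produces the ${\rm Sp}(1)$-factor out of $H^0(\ol(1))$, while the antipodal involution induced by $\t$ on $t$ (condition~(iii)) supplies the reality turning $E_x^{\C}$ into the complexification of a \emph{quaternionic} vector space $E_x$, with $\iota_x:=\iota_x^{\C}|_{T_xM}$\,. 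This is precisely the inverse of the linear correspondence recalled in Section~1, with $\mathcal{H}_x=\iota_x^{-1}(E_x^{0,1})$\,.

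That $\iota_x$ is injective --- equivalently, that the Birkhoff--Grothendieck summands of $\mathcal{H}_x$ all have Chern number at most $-1$ --- is where the CR~hypothesis $\Cal\cap\overline{\Cal}=0$ enters, together with~(iii). First, for each $J\in t$ one has $\dif\!\p(\Cal_J)=\mathcal{H}_x|_J$ and $\dif\!\p(\overline{\Cal}_J)=\overline{\mathcal{H}_x|_J}$; using $(\ker\dif\!\p)^{0,1}\subseteq\Cal$, $(\ker\dif\!\p)^{1,0}\subseteq\overline{\Cal}$ and $\Cal_J\cap\overline{\Cal}_J=0$, a short diagram chase gives $\mathcal{H}_x|_J\cap\overline{\mathcal{H}_x|_J}=0$ in $T_x^{\C}M$\,. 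Now a trivial summand of $\mathcal{H}_x$ would furnish a nonzero constant section $v\in T_x^{\C}M$ with $v\in\mathcal{H}_x|_J$ for all $J$; since $\t$ is anti-CR with $\t_{*}\Cal=\overline{\Cal}$ and $\dif\!\p\circ\t_{*}=\dif\!\p$, also $v\in\dif\!\p(\overline{\Cal}_{\t J})=\overline{\mathcal{H}_x|_{\t J}}$, whence $v\in\mathcal{H}_x|_{\t J}\cap\overline{\mathcal{H}_x|_{\t J}}=0$, a contradiction. Hence $H^0(t,\mathcal{H}_x)=0$ and $\iota_x$ is injective.

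To globalise, observe that $\mathcal{H}=\dif\!\p(\Cal)$ modulo the vertical $(0,1)$-part is a smooth subbundle of $\p^*(T^{\C}M)$, so $(\p^*T^{\C}M)/\mathcal{H}$ is a smooth bundle on $Z$, holomorphic along the fibres; since~(ii) fixes its fibrewise isomorphism type to be $k\ol(1)$ for every $x$, the direct image $E^{\C}:=\p_*\bigl((\p^*T^{\C}M)/\mathcal{H}\bigr)$ is a smooth complex vector bundle of rank $2k$ on $M$ (by the fibrewise $\overline\partial$-theory/Penrose transform, the relevant cohomology having constant dimension). The involution $\t$ cuts out the real form $E$, the fibrewise quaternionic structures patch into a reduction of the structural group to ${\rm Sp}(1)\!\cdot\!{\rm GL}(k/2,\Hq)$ (so that, in particular, $k$ is even), and the $\iota_x$ assemble into $\iota:TM\to E$\,.

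It remains to produce a compatible connection $\nabla$ on $E$ reproducing $\Cal$, and this is where the main difficulty lies. Writing $\Cal=(\ker\dif\!\p)^{0,1}\oplus\mathcal{B}$ determines $\mathcal{B}$ only up to the choice of a complement of $(\ker\dif\!\p)^{0,1}$ in $\Cal$, and the task is to show that an ${\rm SO}(3)$-equivariant such complement exists and is the $\nabla$-horizontal lift of $J\mapsto\iota^{-1}(\ker(J+{\rm i}))$ for a connection $\nabla$ compatible with the quaternionic structure just constructed; verifying that the distribution extracted from $\Cal$ is genuinely of connection type is the technical heart of the argument. Granting this, $\Cal=(\ker\dif\!\p)^{0,1}\oplus\mathcal{B}$ holds by construction, the integrability of $\Cal$ is exactly the condition of Section~1 making $(E,\iota,\nabla)$ a CR~quaternionic structure (rather than merely an almost one), and its twistor space is $(Z,\Cal)$\,. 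Finally, uniqueness of $(E,\iota)$ is immediate from the bijectivity of the linear correspondence: any CR~quaternionic structure with twistor space $(Z,\Cal)$ must induce on each fibre the subbundle $\mathcal{H}_x$ read off from $\Cal$, so $(E_x,\iota_x)$ is determined up to isomorphism and these glue to the $(E,\iota)$ above; note that $\nabla$ itself need not be unique.
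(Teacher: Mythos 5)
Your construction of $(E,\iota)$ is sound and is essentially the paper's own: the subbundle $\mathcal{B}=\Cal/({\rm ker}\dif\!\p)^{0,1}$ of $\p^*(T^{\C\!}M)$ determines, fibrewise, a linear CR quaternionic structure (where the paper invokes \cite[Proposition 3.3]{Qui-QJM98} and \cite[\S 3]{fq}\,, you argue injectivity of $\iota_x$ directly from $\Cal\cap\overline{\Cal}=0$ and the anti-CR involution --- a correct, self-contained replacement); $E^{\C\!}$ is the direct image of $\p^*(T^{\C\!}M)/\mathcal{B}$ (the paper cites \cite[Theorem 9]{KodSpe-III} for the fact that this is a bundle), and $\iota$ comes from the cohomology sequence restricted to sections intertwining the conjugation and the antipodal map. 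Up to this point you and the paper agree, including the uniqueness assertion.

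The genuine gap is the step you explicitly defer: the existence of a compatible connection whose horizontal distribution contains $\mathcal{B}$\,. You call it ``the technical heart'' and then write ``Granting this\,\ldots''\,, but this is precisely the nontrivial content of the theorem, and a priori there is no reason the CR structure should be of ``connection type''. The paper closes it with a short cohomological argument which your proposal lacks: a connection on $Z$ is a $\t$-invariant splitting of $0\to({\rm ker}\dif\!\p)^{1,0}\to T^{\C\!}Z/({\rm ker}\dif\!\p)^{0,1}\to\p^*(T^{\C\!}M)\to0$\,, and splittings whose image contains $\mathcal{B}$ correspond to splittings of $0\to({\rm ker}\dif\!\p)^{1,0}\to T^{\C\!}Z/\Cal\to\p^*(T^{\C\!}M)/\mathcal{B}\to0$\,. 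Restricted to a fibre $t$ the latter reads $0\to\ol(2)\to\bigl(T^{\C\!}Z/\Cal\bigr)|_t\to k\ol(1)\to0$ (with $k$ as in condition (ii)\,; the paper writes this as $2k\ol(1)$ after renaming $k$), and this splits because the extension group $H^1\bigl(t,{\rm Hom}(k\ol(1),\ol(2))\bigr)=H^1\bigl(t,k\ol(1)\bigr)$ vanishes; then \cite[Theorem 10]{KodSpe-III} turns these fibrewise splittings into a global smooth one, and averaging under $\t$ makes it invariant, i.e.\ it comes from a genuine connection containing $\mathcal{B}$\,. Without this degree computation (which uses (ii) a second time) your argument does not close. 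A smaller misframing in the same paragraph: $\mathcal{B}$ is \emph{canonically} the image of $\Cal$ in $T^{\C\!}Z/({\rm ker}\dif\!\p)^{0,1}$, so nothing is ``determined only up to the choice of a complement''; the only issue is whether some ($\t$-invariant) connection-type horizontal distribution contains it, and that is exactly what the above settles.
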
 
\begin{proof}  
We have that $Z$ is a fibre bundle with typical fibre the Riemann sphere (apply \cite[Theorem 6.3]{KodSpe-I_II}\,). 
Furthermore, $\t$ restricted to each fibre of $Z$ is an involutive conjugation without fixed points and, thus, it is the antipodal map. 
Therefore $Z$ is a sphere bundle with structural group ${\rm SO}(3)$\,.\\ 
\indent  
Denote $\mathcal{B}=\Cal/({\rm ker}\dif\!\p)^{0,1}$ and note that by using the fundamental sequence and (ii) we obtain an exact sequence 
\begin{equation} \label{e:sequence_almost_cr_q} 
0 \longrightarrow \mathcal{B} \longrightarrow \p^*(T^{\C\!}M)  \longrightarrow \p^*(T^{\C\!}M)/\mathcal{B} \longrightarrow 0
\end{equation} 
of complex vector bundles which are holomorphic when restricted to the fibres of $Z$. 
Furthermore, (ii)\,, \cite[Proposition 3.3]{Qui-QJM98}\,, and \cite[\S3]{fq} implies that, at each $x\in M$, 
there exists a unique linear CR quaternionic structure $(E_x,\iota_x)$ on $T_xM$ 
whose holomorphic vector bundle is the restriction of $\mathcal{B}$ to $\p^{-1}(x)$\,.\\ 
\indent 
To describe $(E,\iota)$ we use \eqref{e:sequence_almost_cr_q}\,. Firstly, $E^{\C\!}$ is the direct image through $\p$ of $\p^*(T^{\C\!}M)/\mathcal{B}$ 
(that is, for any open subset $U\subseteq M$, the space of sections of $E^{\C\!}|_U$ is the space of sections 
(which are holomorphic when restricted to the fibres of $\p$) 
of the restriction to $\p^{-1}(U)$ of $\p^*(T^{\C\!}M)/\mathcal{B}$\,; 
the fact that $E^{\C}$ is a bundle is given by \cite[Theorem 9]{KodSpe-III}\,). 
Further, the long exact sequence of cohomology of \eqref{e:sequence_almost_cr_q} gives an injective complex linear map $\a$ between the spaces of sections of 
$\p^*(T^{\C\!}M)$ and $\p^*(T^{\C\!}M)/\mathcal{B}$\,, over any open set of the form $\p^{-1}(U)$\,, where $U$ is an open subset of $M$. 
Then $\iota|_U$ is the restriction of $\a$ to the space of sections of $\p^*(T^{\C\!}M)|_U$ which 
intertwine the conjugation and the antipodal map.\\ 
\indent 
Now, any connection on $Z$ corresponds to a splitting of the fundamental sequence (cf.\ \cite[Proposition 5.1]{KodSpe-I_II}\,), 
which is invariant under the antipodal map. Obviously, such splittings exist but we want to obtain 
$\mathcal{S}=\H\oplus({\rm ker}\dif\!\p)^{1,0}$ with $\mathcal{B}\subseteq\H$\,. 
To prove this, note that, instead of the fundamental sequence, we may use the exact sequence 
\begin{equation} \label{e:connection_sequence} 
0 \longrightarrow ({\rm ker}\dif\!\p)^{1,0} \longrightarrow T^{\C\!}Z/({\rm ker}\dif\!\p)^{0,1} \longrightarrow \mathcal \p^*(T^{\C\!}M) \longrightarrow 0\;, 
\end{equation} 
whilst, any splitting of \eqref{e:connection_sequence} whose `image' contains $\mathcal{B}$ corresponds to a splitting of 
\begin{equation} \label{e:connection_sequence_B}
0 \longrightarrow ({\rm ker}\dif\!\p)^{1,0} \longrightarrow  T^{\C\!}Z/\Cal \longrightarrow \p^*(T^{\C\!}M)/\mathcal{B} \longrightarrow 0\;. 
\end{equation} 
But the restriction of \eqref{e:connection_sequence_B} to each fibre $t\,(=\C\!P^1)$ of $Z$ is 
$$0 \longrightarrow \ol(2) \longrightarrow \bigl( T^{\C\!}Z/\Cal\bigr)|_t \longrightarrow 2k\ol(1) \longrightarrow 0\;,$$   
where $\rank E=4k$\,. Together with \cite[Theorem 10]{KodSpe-III}\,,  this quickly implies that there exists a connection on $Z$ whose complexification 
contains $\mathcal{B}$\,, and the proof follows. 
\end{proof} 

\indent 
Note that, by \cite{fq}\,, the twistor space of any CR quaternionic manifold satisfies the conditions of Theorem \ref{thm:charact_twist_cr_q}\,.\\ 
\indent 
Also, in Theorem \ref{thm:charact_twist_cr_q}\,, we have that $\Cal$ is a complex structure on $Z$ if and only if $\dim M=4k$\,. 
Then (i) and (ii) are equivalent to the fact that any fibre of $Z$ is a complex submanifold and its (holomorphic) normal bundle 
is isomorphic to $k\ol(1)$\,. Thus, Theorem \ref{thm:charact_twist_cr_q} gives, in particular, the classical characterisation 
of the twistor space of any quaternionic manifold.

\section{On the twistor space of a co-CR quaternionic manifold}

\indent 
Recall \cite{KodSpe-I_II} (see \cite{Kod}\,) that a family $\Fa$ of compact complex submanifolds of a complex manifold $Z$ is \emph{complex analytic} if 
there exist complex manifolds $P$ and $Q$ and holomorphic maps $\p_Z:Q\to Z$\,, $\p:Q\to P$, with $\p$ a proper surjective submersion, such that  
$\Fa=\bigl\{\p_Z(\p^{-1}(x))\bigr\}_{x\in P}$ and $\p_Z$ restricted to each fibre of $\p$ is an injective immersion. 

\begin{thm} \label{thm:charact_twist_co-cr_q} 
Let $Z$ be a complex manifold endowed with a conjugation $\t$, without fixed points, and a locally complete family $\Fa$ of Riemann spheres.\\ 
\indent 
Then, locally, $Z$ is the twistor space of a co-CR quaternionic manifold, for which $\Fa$ is the family of twistor lines, if the following conditions are satisfied:\\ 
\indent
\quad{\rm (i)} The Birkhoff--Grothendieck decomposition of the normal bundle $Nt$\,, of any $t\in\Fa$\,, contains only terms of Chern number at least $1$\,;\\ 
\indent
\quad{\rm (ii)} The exact sequence $0\longrightarrow Tt \longrightarrow TZ|_t \longrightarrow Nt \longrightarrow 0$ splits, for any $t\in \Fa$, 
and $\dim H^1(t,Tt\otimes N^*t)$ is independent of $t\in \Fa$, 
where $Tt$ and $TZ$ are the holomorphic tangent bundles of $t$ and $Z$, respectively;\\ 
\indent
\quad{\rm (iii)} $\t(\Fa)=\Fa$ and there exists $t_0\in\Fa$ such that $\t(t_0)=t_0$\,.  
\end{thm}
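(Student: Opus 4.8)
The plan is to build the co-CR quaternionic manifold on the parameter space $M$ of the family $\Fa$ (locally, near the point parametrising the real line $t_0$), and then verify that its twistor space is $Z$. The starting point is the Kodaira--Spencer theory: since $\Fa$ is a locally complete family of compact complex submanifolds, $M$ is a complex manifold and for each $t\in\Fa$ there is the Kodaira isomorphism $T_tM\cong H^0(t,Nt)$, together with the associated incidence variety $\p_Z:Q\to Z$, $\p:Q\to M$. By condition (i), each $Nt$ is a positive bundle over $\C\!P^1$, so $H^1(t,Nt)=0$ and the family is unobstructed; this is what guarantees $\dim M$ is as expected and that the fibrewise evaluation maps assemble into the bundle data I want. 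The conjugation $\t$ with $\t(\Fa)=\Fa$ induces an anti-holomorphic involution on $M$, and condition (iii) ensures there is a real point $x_0\in M$ (corresponding to $t_0$) around which to work. First I would restrict attention to the real slice $M_0$ (the fixed-point set of the induced involution on $M$), on which the differential-geometric structure actually lives.

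Next I would extract the quaternionic vector bundle $E$ and the morphism $\r$. The idea, dual to the CR construction in the proof of Theorem~\ref{thm:charact_twist_cr_q}, is to let $\mathcal{B}$ (over the bundle $Z_0$ of admissible complex structures, which is precisely $Q$ restricted to $M_0$) be the subbundle whose fibre at each $J$ is $\p_{Z*}\bigl(\ker(J+{\rm i})\bigr)$ inside $\p^*(T^\C M)$, realised fibrewise from the normal-bundle data $Nt$. The positivity in (i) makes $Nt$, after the functor recalled in \S1, correspond to a \emph{co-CR} quaternionic vector space at each point (the dual of the CR case, whose bundles have Chern numbers $\le -1$): the decomposition of $Nt$ into terms $\ol(k_i)$ with $k_i\ge 1$ is exactly the Birkhoff--Grothendieck signature of the holomorphic bundle of a linear co-CR quaternionic structure on $T_{x}M_0$. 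This gives, pointwise, the pair $(E_x,\r_x)$; the splitting in (ii) is what produces a genuine vector bundle $E$ with a compatible connection $\nabla$, because a splitting of $0\to Tt\to TZ|_t\to Nt\to 0$ is, across the family, the choice of connection corresponding to a splitting of the fundamental sequence (cf.\ \cite[Proposition 5.1]{KodSpe-I_II}).

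Having produced the candidate structure $(E,\r,\nabla)$, I would then verify integrability, i.e.\ that the induced almost co-CR structure $\Cal=(\ker\dif\!\p)^{0,1}\oplus\mathcal{B}$ on $Z_0$ is integrable and that its reduction recovers the complex structure of $Z$. The point is that the fibres of $\p_Z$ over $Z$ are by construction the integral leaves: the twistor-space construction of \S1 produces $Z$ back as the leaf space $Y$ of $\Cal\cap\overline\Cal$, and the locally complete family $\Fa$ supplies precisely the projectability and fibre-injectivity hypotheses needed to identify $Y$ with $Z$. Here the hypothesis that $\dim H^1(t,Tt\otimes N^*t)$ is \emph{independent} of $t$ in (ii) enters crucially: by semicontinuity it forces the relevant direct-image sheaf to be locally free, so the dimension of the co-CR quaternionic distribution is constant and $\mathcal{B}$ is an honest subbundle over all of $M_0$, not merely generically.

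The main obstacle I expect is this last constancy-and-integrability step rather than the linear-algebra pointwise construction. Proving that $\Cal$ is involutive amounts to showing that the fibrewise-holomorphic subbundle $\mathcal{B}\subseteq\p^*(T^\C M)$ is closed under the bracket modulo $(\ker\dif\!\p)^{0,1}$, and the natural way to see this is to exhibit $\mathcal{B}$ as the tangent data of the actual submersion $\p_Z:Q\to Z$ onto the given complex manifold $Z$ — i.e.\ to run the correspondence in reverse and check the two constructions are mutually inverse. The delicate part is controlling how the splitting and the $H^1$-constancy interact so that the connection $\nabla$ is compatible (its induced almost co-CR structure is integrable) uniformly over $M_0$; the semicontinuity argument and \cite[Theorem 9, Theorem 10]{KodSpe-III} on local freeness of direct images are the tools I would lean on, exactly as the dual statements were used in the CR case. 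Once $Z\cong Y$ is established, the uniqueness of $(E,\r)$ follows as in Theorem~\ref{thm:charact_twist_cr_q}, since the pointwise co-CR quaternionic structure is determined by $Nt$ through the Quillen-type classification of \cite[Proposition 3.3]{Qui-QJM98}.
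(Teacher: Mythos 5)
Your proposal is correct and follows essentially the same route as the paper: Kodaira--Spencer theory gives the incidence maps $\p:Q\to P$ and $\p_Z:Q\to Z$, condition (i) plus Quillen's correspondence gives the pointwise linear co-CR quaternionic structures, (iii) cuts out the real parameter space, and (ii) together with \cite[Theorem 10]{KodSpe-III} yields the compatible connection as a splitting of the family version of the normal bundle sequence \eqref{e:charact_twist_co-cr_q_2}, integrability and the identification of the twistor space with $Z$ being automatic because $\mathcal{B}={\rm ker}\dif\!\p_Z$ is tangent to the fibres of $\p_Z$. The only slip is in your third paragraph: the constancy of $\dim H^1(t,Tt\otimes N^*t)$ is not what makes $\mathcal{B}$ an honest subbundle --- that is automatic from (i), which forces $\p_Z$ to be a submersion of constant fibre dimension --- its role, which you do state correctly in your final paragraph, is to ensure (together with the fibrewise splittings from (ii)) that a splitting over all of $Q|_M$, i.e.\ the compatible connection, actually exists.
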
 
\begin{proof}  
Unless otherwise stated, all the objects and maps are assumed complex analytic; in particular, if $P$ is a (complex) manifold then $TP$ denotes 
its holomorphic tangent bundle.\\ 
\indent 
By (i)\,, \cite{Kod}\,, and \cite[Theorem 6.3]{KodSpe-I_II}\,, the family $\Fa$ is given by a map $\p_Z:Q\to Z$, 
where $\p:Q\to P$ is a locally trivial fibre bundle with typical fibre $\C\!P^1$. Also, for any $x\in P$, there exists a natural isomorphism $T_xP=H^0(t_x,Nt_x)$\,, 
where $t_x=\p_Z(\p^{-1}(x))$\,. Furthermore, from (i) it follows quickly that $\p_Z$ is a submersion which, by passing to an open subset of $Z$, 
can be assumed surjective.\\ 
\indent  
Let $\mathcal{B}={\rm ker}\dif\!\p_Z$\,. {}From the isomorphism between $\p^*(TP)$ and the quotient of $TQ$ through ${\rm ker}\dif\!\p$\,, 
we obtain 
\begin{equation} \label{e:charact_twist_co-cr_q_2}
0 \longrightarrow {\rm ker}\dif\!\p \longrightarrow  TQ/\mathcal{B} \longrightarrow \p^*(TP)/\mathcal{B} \longrightarrow 0 
\end{equation} 
which, for any $x\in P$,  restricts to 
\begin{equation} \label{e:charact_twist_co-cr_q_3}
0\longrightarrow Tt_x \longrightarrow TZ|_{t_x} \longrightarrow Nt_x \longrightarrow 0\;, 
\end{equation} 
and, consequently, gives 
\begin{equation} \label{e:charact_twist_co-cr_q_1} 
0 \longrightarrow \mathcal{B}|_{t_x} \longrightarrow t_x\times T_xM \to Nt_x \longrightarrow 0\;, 
\end{equation} 
where we have identified $t_x=\p^{-1}(x)$\,.\\ 
\indent 
Condition (iii) gives that by passing, if necessary, to an open neighborhood of each point of $P$ corresponding to a $\t$-invariant member of $\F$, 
we may suppose $P$ be the complexification of a real-analytic submanifold $M$.\\ 
\indent 
Now, similarly to the proof of Theorem \ref{thm:charact_twist_cr_q}\,, we obtain that $M$ is endowed with an almost co-CR quaternionic structure 
for which $Q|_M$ is the bundle of admissible linear complex structures. Furthermore, any connection on $Q|_M$ containing $\mathcal{B}|_{Q|_M}$ 
corresponds to a splitting of the restriction to $Q|_M$ of \eqref{e:charact_twist_co-cr_q_2} which by (ii) and \cite[Theorem 10]{KodSpe-III} exists, 
and the proof follows. 
\end{proof} 

\indent 
Theorem \ref{thm:charact_twist_co-cr_q} can be slightly extended, as follows. 

\begin{rem} \label{rem:charact_co-cr_q_extended}  
1) If in (i) of Theorem \ref{thm:charact_twist_co-cr_q} we further assume that the Chern numbers are contained by $\{1,2,3\}$ 
then condition (ii) becomes superfluous.\\ 
\indent 
2) The conclusion of Theorem \ref{thm:charact_twist_co-cr_q} still holds if we assume that $Z$ contains a Riemann sphere $t$ 
preserved by $\t$, which satisfies (ii)\,, and (i)\,, where for the latter the Chern numbers are contained by $\{k,k+1\}$\,, for some positive integer $k$ 
(this is an immediate consequence of \cite{Kod}\,, \cite[Theorems 6.3\,, 7.4]{KodSpe-I_II}\,, and Theorem \ref{thm:charact_twist_co-cr_q}\,). 
\end{rem} 

\indent  
Remark \ref{rem:charact_co-cr_q_extended} shows, in particular, that Theorem \ref{thm:charact_twist_co-cr_q} is a natural generalization of classical results 
(see \cite{Hit-complexmfds}\,, \cite{IMOP} and the references therein) on quaternionic and anti-self-dual manifolds, and three-dimensional Einstein--Weyl spaces.\\ 
\indent 
It is well known that, locally, the leaf space of a nowhere zero conformal vector field, on an anti-self-dual manifold, is a (three-dimensional) Einstein--Weyl space 
(see \cite{PanWoo-sd} and the references therein). In higher dimensions, we have the following result. 

\begin{cor} \label{cor:qvf}
Locally, the leaf space of any nowhere zero quaternionic vector field on a quaternionic manifold is a co-CR quaternionic manifold. 
\end{cor}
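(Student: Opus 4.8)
The plan is to realise the leaf space as the parameter manifold of a family of Riemann spheres inside a quotient of the twistor space of $N$, and then to invoke Theorem \ref{thm:charact_twist_co-cr_q} together with Remark \ref{rem:charact_co-cr_q_extended}(1)\,. Let $N$ be quaternionic with $\dim N=4k$ and let $\p:Z\to N$ be its twistor space --- the $S^2$-bundle of admissible complex structures, so that $\dim_\C Z=2k+1$, the conjugation $\t$ is the fibrewise antipodal map, and each twistor line $t_p=\p^{-1}(p)$ is $\t$-invariant with normal bundle $Nt_p=2k\ol(1)$\,. A nowhere zero quaternionic vector field $X$ generates a local flow by quaternionic transformations; these lift to biholomorphisms of $Z$ commuting with $\t$, whence $X$ lifts to a holomorphic vector field $\widetilde{X}$ on $Z$ which is $\p$-related to $X$ and satisfies $\t_*\widetilde{X}=\overline{\widetilde{X}}$\,. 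As $\dif\!\p(\widetilde{X})=X$ is nowhere zero, so is $\widetilde{X}$; I would therefore pass to the local complex leaf space $q:Z\to Z'$ of $\widetilde{X}$\,, so that $\dim_\C Z'=2k$\,. Since $\t$ intertwines the holomorphic flow of $\widetilde{X}$ with its complex conjugate, it maps $\widetilde{X}$-orbits to $\widetilde{X}$-orbits and descends to a conjugation $\t'$ of $Z'$\,.

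Next I would produce the family. Because $\dif\!\p(\widetilde{X}|_{t_p})=X_p\neq0$ while $\dif\!\p(Tt_p)=0$\,, the field $\widetilde{X}$ is transverse to $t_p$ everywhere; hence, after shrinking to a thin tubular neighbourhood of a fixed $t_{p_0}$\,, the map $q|_{t_p}$ is an embedding and $t'_p:=q(t_p)$ is a Riemann sphere in $Z'$\,. As the flow of $\widetilde{X}$ covers that of $X$ and carries $t_p$ to $t_{\phi_s(p)}$\,, the sphere $t'_p$ depends only on the $X$-orbit of $p$\,, so the $t'_p$ form a family $\Fa'$ parametrised by $M=N/X$\,. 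The crucial point is the normal bundle of $t'_p$\,. The image of $\widetilde{X}|_{t_p}$ in $Nt_p=2k\ol(1)$ is the section $\s_p$ corresponding, under $H^0(t_p,Nt_p)=(T_pN)^{\C}$\,, to $X_p\in T_pN$; by the linear quaternionic algebra of \S1, a nonzero real vector yields a nowhere vanishing section, so $\s_p$ spans a trivial sub-line-bundle $\ol\subset 2k\ol(1)$\,. Since $\widetilde{X}\notin Tt_p$ and ${\rm ker}\,\dif\!q=\langle\widetilde{X}\rangle$\,, one obtains $Nt'_p=\bigl(2k\ol(1)\bigr)/\ol$\,. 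A standard computation (twist the defining sequence by $\ol(-1)$ and recognise the resulting degree $-1$ sub-line-bundle as a projective line in $\C\!P^{2k-1}$, via the Euler sequence) then gives
\[
Nt'_p=\ol(2)\oplus(2k-2)\ol(1)\,.
\]

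The remaining verifications are then routine. The Chern numbers of $Nt'_p$ lie in $\{1,2\}$\,, so condition (i) of Theorem \ref{thm:charact_twist_co-cr_q} holds and, by Remark \ref{rem:charact_co-cr_q_extended}(1)\,, condition (ii) is superfluous; moreover $H^1(t'_p,Nt'_p)=0$ and $\dim_\C H^0(t'_p,Nt'_p)=4k-1=\dim M$\,, so $\Fa'$ is locally complete. For (iii)\,, each $t_p$ being $\t$-invariant forces $t'_p$ to be $\t'$-invariant, so $\t'(\Fa')=\Fa'$ and $t'_{p_0}$ is a fixed member; finally $\t'$ has no fixed points near $t'_{p_0}$\,, for if $[z]\in t'_{p_0}$ were fixed then the antipode $\t(z)\in t_{p_0}$ would lie on the $\widetilde{X}$-orbit of $z$\,, contradicting the injectivity of $q|_{t_{p_0}}$ (as $\t(z)\neq z$). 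Applying Theorem \ref{thm:charact_twist_co-cr_q} to $Z'$\,, restricted to a neighbourhood of $t'_{p_0}$ on which $\t'$ is fixed-point-free, exhibits $Z'$ as the twistor space of a co-CR quaternionic structure on $M=N/X$\,. The main obstacle is the normal bundle computation --- specifically, checking that $\s_p$ is nowhere vanishing, which is precisely where the quaternionic (rather than merely complex) nature of $N$ enters, so that the Birkhoff--Grothendieck type of $Nt'_p$ is the uniform, positive bundle $\ol(2)\oplus(2k-2)\ol(1)$ needed to fall within the scope of Theorem \ref{thm:charact_twist_co-cr_q} and Remark \ref{rem:charact_co-cr_q_extended}(1)\,.
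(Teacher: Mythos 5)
Your proposal is correct and follows essentially the same route as the paper: lift the quaternionic vector field to a holomorphic vector field on the twistor space, pass (locally) to the leaf space of the induced one-dimensional holomorphic foliation, observe that the twistor lines map to Riemann spheres with normal bundle $\ol(2)\oplus(2k-2)\ol(1)$, and conclude via Theorem \ref{thm:charact_twist_co-cr_q} together with Remark \ref{rem:charact_co-cr_q_extended}(1). You merely supply details the paper leaves implicit --- notably the Euler-sequence computation of the normal bundle, the descent of the conjugation, and the fixed-point-free check --- all of which are sound.
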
 
\begin{proof} 
Let $V$ be a nowhere zero quaternionic vector field on a quaternionic manifold $M$ whose orbits are the fibres of a submersion $\phi:M\to N$. 
Then $V$ lifts to a holomorphic vector field $\widetilde{V}$ on the twistor space $Z_M$ of $M$ (use, for example, \cite{IMOP}\,). 
Assume, for simplicity, that the one-dimensional holomorphic foliation generated by $\widetilde{V}$ is simple; that is, it is given by the fibers 
of a holomorphic submersion $\Phi$ from $Z_M$ onto some complex manifold $Z_N$\,.\\ 
\indent 
Then $\Phi$ maps the twistor lines on $Z_M$ onto a complex analytic family of Riemann spheres on $Z_N$ each of which has normal bundle 
$2k\ol(1)\oplus\ol(2)$\,, where $\dim M=4(k+1)$\,. Furthermore, this family is parametrised by a complexification of $N$; 
consequently, it is locally complete (apply \cite{Kod}\,).\\ 
\indent 
{}By Remark \ref{rem:charact_co-cr_q_extended}(1)\,, we have that $Z_N$ is the twistor space of a co-CR quaternionic structure on $N$. 
\end{proof} 
 
\indent 
Note that, in the proof of Corollary \ref{cor:qvf}\,, the induced twistorial structure (of the co-CR quaternionic structure) on $N$ is unique 
with the property that $\phi$ be a twistorial map (see \cite{PanWoo-sd} and \cite{LouPan-II} for the definition of twistorial structures and maps).\\ 
\indent 
On the other hand, unlike the complex setting, the quaternionic distribution generated by a quaternionic vector field is not necessarily integrable. 
Indeed, if not, then any homogeneous quaternionic manifold would be locally isomorphic with the quaternionic projective space - a contradiction.

\section{Another natural twistorial correspondence} 

\indent 
Recall (see \cite[p.\ 172]{GolGui} and note that the 
definitions extend easily to the complex analytic category) that two complex curves $c_1$ and $c_2$ on a complex manifold $Z$ 
\emph{have a contact of order $k$} at a point $x\in c_1\cap c_2$ if for any holomorphic function $f$ defined on some open neighborhood of $x$ in $Z$ 
we have that $f|_{c_1}$ vanishes up to the $k$-th order at $x$ if and only if $f|_{c_2}$ vanishes up to the $k$-th order at $x$\,. Then a \emph{$k$-jet of (complex) curves} 
on $Z$ at $x$ is an equivalence class of curves on $Z$ which have a contact of order $k$ at $x$\,. Further, the set $Y_k(Z)$ of all $k$-jets of 
curves at all the points of $Z$ is, in a natural way, a complex manifold of dimension $kl+l+1$\,, where $\dim_{\C\!}Z=l+1$\,. Moreover, the canonical map from $Y_k(Z)$ onto $Z$ 
is the projection of a locally trivial fibre space; in particular, $Y_0(Z)=Z$, whilst $Y_1(Z)$ is the projectivisation of the holomorphic tangent bundle of $Z$.

\begin{thm} \label{thm:q_mfds_from_rational_surf}
Let $Z$ be a complex manifold endowed with a conjugation without fixed points and which preserves an embedded Riemann sphere $t\subseteq Z$ 
whose normal bundle is $l\,\ol(k+1)$\,, with $k$ and $l$ positive integers.\\ 
\indent 
Then there exists a quaternionic manifold, of dimension $2l(k+1)$\,, whose twistor space is an open subset of $Y_k(Z)$\,, endowed with the conjugation induced by $\t$, 
and for which the canonical lift of $t$ to $Y_k(Z)$ is a twistor line. 
\end{thm}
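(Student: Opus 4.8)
The plan is to realise the asserted twistor correspondence by lifting $t$ canonically to the jet space $Y_k(Z)$ and then invoking the classical characterisation of the twistor spaces of quaternionic manifolds (the specialisation of Theorem~\ref{thm:charact_twist_cr_q} in which $\Cal$ is a genuine complex structure, together with Remark~\ref{rem:charact_co-cr_q_extended}; see also \cite{Hit-complexmfds} and \cite{IMOP}\,). First I would let $\tilde t\subseteq Y_k(Z)$ be the canonical lift of $t$\,, whose point over $x\in t$ is the $k$-jet at $x$ of the curve $t$\,; this is an embedded Riemann sphere projecting isomorphically onto $t$ under the canonical map $Y_k(Z)\to Z$\,. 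Since $\t$ is antiholomorphic and carries germs of curves to germs of curves, it induces an involutive conjugation $\t_k$ of $Y_k(Z)$\,; as the base point of any $k$-jet is moved by $\t$\,, which has no fixed points, $\t_k$ has no fixed points either, while $\t(t)=t$ gives $\t_k(\tilde t)=\tilde t$\,. Thus the whole problem reduces to two points: (a) computing the normal bundle $N\tilde t$\,, and (b) producing the locally complete, $\t_k$-invariant family of which $\tilde t$ is a (real) member.

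The heart of the argument is (a)\,. The structural point is that the normal bundle of the canonical $k$-jet lift is the bundle of $k$-jets of the normal bundle,
\[
N\tilde t\;\cong\;J^k(N)\,,\qquad N=l\,\ol(k+1)\,.
\]
This is intrinsic: over $x\in t$ an infinitesimal deformation of $\tilde t$ in $Y_k(Z)$ is the $k$-jet at $x$ of a deformation of $t$\,, that is, the $k$-jet of a section of $N$\,, and as $x$ varies these glue, by the very definition of the jet cocycle, into $J^k(N)$ (equivalently, the tower $Y_k(Z)\to\dots\to Z$ filters $N\tilde t$ with graded pieces $N\otimes(T^*t)^{\otimes j}=l\,\ol(k+1-2j)$\,, $0\le j\le k$\,, assembled by the jet extensions). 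It then suffices to determine the splitting type of $J^k(\ol(k+1))$ over the Riemann sphere. This bundle has rank $k+1$ and, from $\deg J^k(\ol(m))=(k+1)m-k(k+1)$ with $m=k+1$\,, degree $k+1$; its jet filtration has graded bundle $\bigoplus_{j=0}^{k}\ol(k+1-2j)$\,, which is not balanced, but the successive (Atiyah-type) jet extension classes are nondegenerate, and one checks that they force the balanced splitting
\[
J^k(\ol(k+1))\;\cong\;(k+1)\,\ol(1)\,.
\]
Consequently $N\tilde t\cong l(k+1)\,\ol(1)=2m\,\ol(1)$ with $2m=l(k+1)$\,, so that $\dim Y_k(Z)=(k+1)l+1=2m+1$ and $4m=2l(k+1)$\,, as required (here $l(k+1)$ is even, which is exactly what is needed for the conjugation induced on $N\tilde t$ to define a quaternionic, rather than merely real, structure).

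With (a) in hand, (b) and the conclusion are routine. Since $N\tilde t\cong 2m\,\ol(1)$ we have $H^1(\tilde t,N\tilde t)=0$\,, so by \cite{Kod} and \cite[Theorem~6.3]{KodSpe-I_II} the sphere $\tilde t$ belongs to a locally complete family $\Fa'$ of Riemann spheres in $Y_k(Z)$\,, parametrised by a complex manifold of dimension $h^0(\tilde t,N\tilde t)=4m=2l(k+1)$\,; every nearby member again has normal bundle $2m\,\ol(1)$\,, and the exact sequence $0\to T\tilde t\to TY_k(Z)|_{\tilde t}\to N\tilde t\to 0$ splits, because $\mathrm{Ext}^1(2m\,\ol(1),\ol(2))=H^1(2m\,\ol(1))=0$\,. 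The conjugation $\t_k$ preserves $\Fa'$ and acts antiholomorphically on its parameter space $P$\,, with $\tilde t$ a fixed point; exhibiting $P$ as the complexification of its real-analytic fixed locus $M$ (of real dimension $4m=2l(k+1)$\,) and applying the characterisation of Theorem~\ref{thm:charact_twist_cr_q} in the quaternionic case (equivalently, Remark~\ref{rem:charact_co-cr_q_extended}(1) with all Chern numbers equal to $1$\,, the uniform normal bundle $2m\,\ol(1)$ forcing $\iota$ to be an isomorphism) then shows that $M$ carries a quaternionic structure of dimension $2l(k+1)$ whose twistor space is the open subset of $Y_k(Z)$ swept out by $\Fa'$\,, and for which $\tilde t$ is a twistor line.

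The step I expect to be genuinely delicate is the balancedness $J^k(\ol(k+1))\cong(k+1)\,\ol(1)$ in (a)\,: only the graded pieces of the jet filtration are immediate, and one must show that the iterated jet (Atiyah) extension classes are nondegenerate enough to collapse the splitting type to the uniform $\ol(1)$\,. Concretely, after twisting by $\ol(k-m)$ this amounts to checking that the connecting homomorphisms attached to the extension are of maximal rank, so that the resulting degree-zero rank-$(k+1)$ bundle has $h^1=0$ and $h^0$ of its $(-1)$-twist vanishing, whence it is trivial. Everything else — the existence and local completeness of $\Fa'$\,, the behaviour of $\t_k$\,, and the final appeal to the quaternionic twistor characterisation — is standard once the normal bundle is known.
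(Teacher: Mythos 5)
Your proposal is correct in substance, but it reaches the crucial point by a genuinely different route than the paper. The paper never touches jet-bundle algebra: it first deforms $t$ inside $Z$ itself (Kodaira applies there, since $Nt=l\,\ol(k+1)$ is positive), obtaining the locally complete family $\Fa$ with its double fibration $\p_Z:Q\to Z$, $\p:Q\to P$, and then uses an iterated blow-up argument ($k+1$ suitable blow-ups at a point $z\in u$ make the proper transform of $u$ have trivial normal bundle of rank $l$) to show that the members of $\Fa$ osculating $u$ to order $k$ at $z$ form an $l$-dimensional family; this exhibits $\p_Z$ as factoring through a submersion $Q\to Y_k(Z)$, from which the paper reads off both the normal bundle $l(k+1)\ol(1)$ of the canonical lifts and, crucially, the interpretation of $Y_k(Z)\to Z$ as a twistorial retraction onto the generic submanifold $M$ (the real form of $P$). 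Your route---identifying $N\tilde t\cong J^k(Nt)$ and proving the balanced splitting $J^k(\ol(k+1))\cong(k+1)\ol(1)$, then running Kodaira directly in $Y_k(Z)$---is more economical for the statement as literally given, and both of your key facts are true; for the splitting, which you rightly flag as the delicate step, there is a cleaner argument than chasing extension classes: the $k$-jet evaluation gives $0\to l\,\ol(-k-1)\to H^0(Nt)\otimes\ol\to J^k(Nt)\to 0$ (the kernel over $z$ being the sections vanishing to order $k+1$ at $z$), and the dual sequence exhibits $J^k(Nt)^*$ as the kernel of the evaluation surjection $H^0(Nt)^*\otimes\ol\to l\,\ol(k+1)$, a bundle with no sections, degree $-l(k+1)$ and rank $l(k+1)$, hence $l(k+1)\ol(-1)$. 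What the paper's longer detour buys is exactly the extra structure your argument does not produce: the identification of the fibres of $Y_k(Z)\to Z$ with contact families of members of $\Fa$, hence the retraction $\phi:N\to M$ and the genericity and type of $M\subseteq N$, which is precisely what the proof of Corollary \ref{cor:qg_sR} appeals to. Two points you should tighten: the evenness of $l(k+1)$ is not something to be ``needed'' and assumed---it is a consequence of the existence of the fixed-point-free conjugation covering the antipodal map on $Nt$, via \cite{Qui-QJM98}, as the paper notes at the outset; and the isomorphism $N\tilde t\cong J^k(Nt)$, being the pivot of your whole argument, deserves its one-line coordinate verification (for $t=\{y=0\}$ and a deformation $y=\epsilon s(x)$, the canonical lift displaces by $\bigl(s,s',\dots,s^{(k)}\bigr)$, which is exactly the $k$-jet of $s$).
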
 
\begin{proof} 
The conjugation on $Z$ induces a conjugation on the normal bundle of the embedded Riemann sphere, covering the antipodal map. 
Hence, if $k$ is even then, also, $l$ must be even (see \cite{Qui-QJM98}\,).\\ 
\indent 
Then, as in the proof of Theorem \ref{thm:charact_twist_co-cr_q}\,, we obtain on $Z$ a locally complete family $\Fa$ of Riemann spheres 
given by holomorphic maps $\p_Z:Q\to Z$ and $\p:Q\to P$, where the former can be assumed a surjective submersion, whilst the latter 
is a locally trivial fibre bundle with typical fibre $\C\!P^1$. Also, by passing to an open subset, if necessary, $P$ is the complexification of a 
real analytic manifold $M$.\\ 
\indent 
Now, if we suitably blow up $k+1$ times $Z$ at any point $z\in u$\,, of any $u\in\Fa$, we obtain a complex manifold $Z_{u,z}$ endowed with an embedded 
Riemann sphere with trivial normal bundle (of rank $l$\,). Hence, this is contained in an $l$-dimensional locally complete family $\Fa_{u,z}$ of Riemann spheres, 
embedded in $Z_{u,z}$\,,  all of which are obtained as proper transforms of members of $\Fa$. Furthermore, $v\in\Fa$ transforms to a member of $\Fa_{u,z}$ if and only if 
$u$ and $v$ have a contact of order $k$ at $z$\,.\\ 
\indent 
Therefore $\p_Z$ factors into a holomorphic submersion with $l$-dimensional fibres from $Q$ to $Y_k(Z)$ followed by the projection from $Y_k(Z)$ onto $Z$. 
Moreover, the normal bundle of the canonical lift to $Y_k(Z)$ of any member of $\Fa$ is isomorphic to $l(k+1)\ol(1)$\,. This shows that an open subset of $Y_k(Z)$ 
is the twistor space of a quaternionic manifold $N$. Moreover, the projection from $Y_k(Z)$ onto $Z$ corresponds to a twistorial retraction of the inclusion $M\subseteq N$ 
whose differential at each point is given by the cohomology sequence of the exact sequence 
$0\longrightarrow kl\ol\longrightarrow l(k+1)\ol(1)\longrightarrow l\ol(k+1)\longrightarrow0$\,; 
in particular, $M$ is a generic submanifold of $N$ (note that, the induced almost CR quaternionic structure on $M$ can be also obtained 
by using the proof of Theorem \ref{thm:charact_twist_cr_q}\,, with $Z$ replaced by $Q|_M$ endowed with the CR structure induced from $Q$\,). 
\end{proof} 

\indent 
We say (compare \cite{GolGui}\,) that two embeddings $\phi:P\to Q$ and $\psi:P\to R$ define the same \emph{embedding germ} 
if there exist open neighbourhoods $U$ and $V$ of $\phi(P)$ and $\psi(P)$\,, respectively, and a diffeomorphism $\xi:U\to V$ such that 
$\psi=\xi\circ\phi$\,; certainly, if $Q$ and $R$ are endowed with some geometric structure then we require $\xi$ to preserve it. 

\begin{defn} \label{defn:type_cr_q} 
Let $(M,E,\iota)$ be an almost CR quaternionic manifold and let $k$ and $l$ be positive integers. We say that $(M,E,\iota)$ is of 
\emph{type $(k,l)$} if, for any $x\in M$, the holomorphic vector bundle of $(T_xM,E_x,\iota_x)$ is $l\ol(-k)$\,. 
\end{defn} 

\indent 
Note that, if $N$ is quaternionic and $M\subseteq N$ is generic of type $(k,l)$ then $\dim M=l(k+1)$ and $\dim N=2kl$\,; in particular, 
if $\dim N=2(\dim M-1)$ then the type of $M$ is determined by its dimension. 

\begin{cor} \label{cor:qg_sR} 
There exists a natural correspondence between the following classes, where $k$ and $l$ are positive integers:\\ 
\indent 
\quad{\rm (i)} Complex manifolds endowed with a conjugation without fixed points and which preserves an embedded Riemann sphere  
with normal bundle $l\ol(k)$\,.\\ 
\indent 
\quad{\rm (ii)} Quadruples $(M,N,x,\phi)$\,, with $x\in M\subseteq N$, $N$ quaternionic, $M\subseteq N$ generic and of type $(k,l)$\,,  
and $\phi:N\to M$ a twistorial retraction of the inclusion $M\subseteq N$.\\ 
\indent 
Moreover, the correspondence is bijective if we pass to embedding germs.  
\end{cor}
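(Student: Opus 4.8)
The plan is to establish the correspondence as essentially a reformulation of Theorem \ref{thm:q_mfds_from_rational_surf}, with the index shift $k+1\rightsquigarrow k$ being the bookkeeping point to watch. Starting from class (i), I would take a complex manifold $Z$ with a fixed-point-free conjugation preserving an embedded Riemann sphere $t\subseteq Z$ whose normal bundle is $l\ol(k)$\,. Writing $k=(k-1)+1$\,, this is exactly the hypothesis of Theorem \ref{thm:q_mfds_from_rational_surf} with the integer there being $k-1$\,. Applying that theorem produces a quaternionic manifold $N$ of dimension $2l\bigl((k-1)+1\bigr)=2kl$ whose twistor space is an open subset of $Y_{k-1}(Z)$\,, together with the twistorial retraction $\phi$ coming from the projection $Y_{k-1}(Z)\to Z$\,. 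The submanifold $M\subseteq N$ arising as the complexification's real form is generic; to identify its type I would read off the cohomology sequence
\begin{equation*}
0\longrightarrow (k-1)l\,\ol\longrightarrow lk\,\ol(1)\longrightarrow l\,\ol(k)\longrightarrow0
\end{equation*}
exhibited in the proof of Theorem \ref{thm:q_mfds_from_rational_surf}\,, which shows the holomorphic vector bundle of $(T_xM,E_x,\iota_x)$ is $l\ol(-k)$\,, i.e.\ $M$ is of type $(k,l)$ in the sense of Definition \ref{defn:type_cr_q}\,. Choosing $x\in M$ to be any point (for instance one on the distinguished real member), we obtain a quadruple $(M,N,x,\phi)$ as in (ii)\,. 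The dimension check $\dim M=l\bigl((k-1)+1\bigr)=lk$ and $\dim N=2l(k-1+1)-\text{(correction)}$ must be matched against the formulas recorded after Definition \ref{defn:type_cr_q}\,, namely $\dim M=l(k+1)$ and $\dim N=2kl$ for type $(k,l)$\,; here I would need to confirm that the index conventions line up, which is the first place a discrepancy could surface.

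For the reverse direction I would start from a quadruple $(M,N,x,\phi)$ with $N$ quaternionic, $M\subseteq N$ generic of type $(k,l)$\,, and $\phi:N\to M$ a twistorial retraction of the inclusion. Passing to twistor spaces, $N$ has a twistor space $Z_N$ carrying the tautological family of twistor lines with normal bundle $l(k+1)\ol(1)$ (from $\dim N=2l(k+1)$ in the type-$(k,l)$ normalization, or the appropriate value once indices are reconciled). The twistorial retraction $\phi$ induces, on the twistorial side, a holomorphic map realizing $Z_N$ (an open subset of) some $Y_{k-1}(Z)$ over a complex manifold $Z$\,; concretely $Z$ is the twistor space of the CR quaternionic structure that $M$ inherits as a generic submanifold, via the construction in \S1 and the proof of Theorem \ref{thm:charact_twist_cr_q} applied to $Q|_M$\,. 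The type-$(k,l)$ condition guarantees precisely that the holomorphic vector bundle of $M$ is $l\ol(-k)$\,, so that the corresponding Riemann sphere $t\subseteq Z$ has normal bundle $l\ol(k)$\,, landing back in class (i)\,. The fixed-point-free conjugation on $Z$ is the one descending from the conjugation on $Z_N$ that encodes the real structure of $M\subseteq N$\,.

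The two constructions are mutually inverse by design: the projection $Y_{k-1}(Z)\to Z$ recovers $Z$ from the quaternionic data, and conversely the jet-space construction recovers $Y_{k-1}(Z)$ (hence $N$) from $Z$\,. To upgrade this to the bijective statement ``Moreover, the correspondence is bijective if we pass to embedding germs,'' I would argue that the only nonuniqueness in Theorem \ref{thm:q_mfds_from_rational_surf} and in the submanifold reconstruction is the choice of open neighbourhoods and of representatives, which is exactly what is quotiented out by the embedding-germ equivalence defined just before Definition \ref{defn:type_cr_q}\,. Thus an isomorphism-germ of embedded Riemann spheres with conjugation corresponds to an isomorphism-germ of quadruples, and the maps in both directions respect these germ classes; one checks that the round trips are the identity on germs using the naturality of the jet-space projection and the uniqueness clause in Theorem \ref{thm:charact_twist_cr_q} (that $(E,\iota)$ is unique up to isomorphism).

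The main obstacle I anticipate is not conceptual but the index normalization: the hypothesis of this corollary is stated with normal bundle $l\ol(k)$ whereas Theorem \ref{thm:q_mfds_from_rational_surf} is stated with $l\ol(k+1)$\,, and Definition \ref{defn:type_cr_q} together with the dimension formulas after it fix yet another convention. The careful part is to verify that, under the substitution that identifies the ``$k$'' of the corollary with the ``$k+1$'' of the theorem (equivalently, replacing the theorem's $k$ by $k-1$ throughout), all the Chern-number, dimension, and type data agree simultaneously, so that type $(k,l)$ genericity of $M$ is genuinely equivalent to $t$ having normal bundle $l\ol(k)$\,. Once this bookkeeping is pinned down, the substance of the correspondence is already contained in Theorem \ref{thm:q_mfds_from_rational_surf} and the genericity discussion of \S1, and little further work is required beyond phrasing the germ-level bijectivity.
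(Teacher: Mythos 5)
Your proposal follows the paper's own proof essentially step for step: the forward direction is delegated to the proof of Theorem \ref{thm:q_mfds_from_rational_surf} under the identification of the corollary's $k$ with the theorem's $k+1$, and the converse passes to the twistor space $Z(N)$, uses the holomorphic submersion $\Phi$ corresponding to the twistorial retraction $\phi$, notes that the images of the twistor lines are Riemann spheres with normal bundle $l\ol(k)$ parametrised by a complexification of $M$ (whence the conjugation), and takes $t$ to be the image of the twistor line of $x$; the germ-level bijectivity, which the paper leaves implicit, you sketch via the round-trip argument. So the approach is the paper's.

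However, you make the whole argument conditional on ``confirming that the index conventions line up,'' and the bookkeeping you record there is actually wrong in two places, so the confirmation must be supplied rather than deferred. First, $\dim M$ is not $l\bigl((k-1)+1\bigr)=lk$: the tangent space of $M$ at the point corresponding to $t\in\Fa$ is the real form of $H^0(t,Nt)$, and $h^0\bigl(l\ol(k)\bigr)=l(k+1)$, so $\dim M=l(k+1)$, which is exactly the value required for type $(k,l)$; your formula arose from substituting the theorem's $k$ into the dimension formula of Definition \ref{defn:type_cr_q}, which is stated in terms of the corollary's $k$. Second, in the converse direction the twistor lines of $Z(N)$ have normal bundle $kl\,\ol(1)$, not $l(k+1)\ol(1)$, because type $(k,l)$ forces $\dim N=2kl$ (again the formula after Definition \ref{defn:type_cr_q}); with this value the $H^0$-sequence of $0\longrightarrow(k-1)l\,\ol\longrightarrow kl\,\ol(1)\longrightarrow l\,\ol(k)\longrightarrow0$ gives dimensions $(k-1)l$, $2kl$, $l(k+1)$ for $\ker\dif\!\phi_x$, $T_xN$, $T_xM$, and everything is consistent. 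Two further points of looseness: (a) this cohomology sequence describes $\dif\!\phi_x$ but does not by itself show that the holomorphic vector bundle of $(T_xM,E_x,\iota_x)$ is $l\ol(-k)$; that bundle consists, over a $(k-1)$-jet based at $z\in t$, of the sections of $Nt=l\ol(k)$ whose canonical lift to $Y_{k-1}(Z)$ vanishes there, i.e.\ of the sections vanishing to order $k$ at $z$, and identifying this subbundle of $t\times H^0\bigl(l\ol(k)\bigr)$ with $l\ol(-k)$ is a separate (rational normal curve) computation, or one invokes, as the paper does, Theorem \ref{thm:charact_twist_cr_q} applied to $Q|_M$; (b) calling $Z$ ``the twistor space of the CR quaternionic structure of $M$'' is imprecise, since that twistor space is a CR manifold of real dimension $\dim M+2$, whereas $Z$ is the complex manifold onto which $\Phi$ maps, exactly as in your (and the paper's) first sentence of the converse.
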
 
\begin{proof} 
How to pass from objects as in (i) to objects as in (ii) it is shown in the proof of Theorem \ref{thm:q_mfds_from_rational_surf}\,.\\ 
\indent  
Conversely, given a quadruple $(M,N,x,\phi)$ as in (ii) let 
$Z(N)$ be the twistor space of $N$. Then $\phi$ corresponds to a holomorphic submersion $\Phi$ from $Z(N)$ onto some complex manifold $Z$ such that 
the family of twistor lines on $Z(N)$ is mapped by $\Phi$ into a family of Riemann spheres embedded into $Z$. Consequently, each member of this family 
has normal bundle isomorphic to $l\ol(k)$\,. Furthermore, as $\phi$ is a retraction of the inclusion $M\subseteq N$, at least locally, the parameter space of this family 
is a complexification of $M$ and therefore $Z$ is also endowed with a conjugation $\t$. 
Now, let $t\subseteq Z$ be the image through $\Phi$ of the twistor line corresponding to $x$\,. Then $(Z,\t,t)$ satisfies condition~(i)\,. 
\end{proof} 

\indent 
If in Theorem \ref{thm:q_mfds_from_rational_surf}\,, we have $k=l=1$\,, and if in Corollary \ref{cor:qg_sR}\,, we have $k=l+1=2$\,, 
then we obtain results of \cite{Hit-complexmfds}\,.
 
\begin{exm} \label{exm:rrs} 
Let $n$ be a nonnegative even number and let $S_n$ be the projectivisation of $\ol\oplus\ol(n)$\,. Then the conjugations of $\ol$ and $\ol(n)$ 
induce a conjugation $\t$ of $S_n$ covering the antipodal map.\\ 
\indent 
Any meromorphic section of $\ol(n)$ corresponds, up to a constant nonzero factor, to a divisor on $\C\!P^1$ 
(with $m$ poles and $n+m$ zeros, for some natural number $m$\,). Furthermore, if the divisor 
is invariant under the antipodal map then the corresponding meromorphic section will intertwine the antipodal map and the conjugation of $\ol(n)$\,. 
Let $s$ be such a meromorphic section of $\ol(n)$ with $m$ poles (necessarily, $m$ is even). Then the closure $t$ of the image of the section of $S_n$ induced by $(1,s)$ 
is preserved by $\t$\,. Also, the Chern number of its normal bundle is equal to $n+2m$ (see \cite[p.~519]{GriHar}\,). Thus, $(S_n,\t,t)$ satisfies (i) of Corollary \ref{cor:qg_sR}\,, 
with $k=n+2m$ and $l=1$. 
\end{exm} 

\indent 
If $n=0$\,, in Example \ref{exm:rrs}\,, then $m$ can, also, be any odd natural number, by suitably changing the conjugation: 

\begin{exm} \label{exm:rms} 
Let $\s$ be the conjugation on $S_0=\C\!P^1\times\C\!P^1$ given by the antipodal map, acting on each factor, and let $Y_m$ be the 
space of $(2m-1)$-jets of maps from $\C\!P^1$ to itself, where $m$ is an odd natural number. 
On denoting by $\a$ and $\b$ the source and target projections, respectively, from $Y_m$ onto $\C\!P^1$ 
then $(\a,\b):Y_m\to\C\!P^1\times\C\!P^1$ is the projection of a locally trivial fibre space with typical fibre the vector space of polynomials, 
in one (complex) variable, of degree at most $2m-1$ and with zero constant term; in particular, $\dim_{\C\!}Y_m=2m+1$ (cf.~\cite{GolGui}\,).\\ 
\indent 
On associating to each jet $[\phi]\in Y_m$\,, at $(x,y)\in\C\!P^1\times\C\!P^1$, the $(2m-1)$-jet of curves on $\C\!P^1\times\C\!P^1$, at $(x,y)$\,, 
given by the graph of $\phi$\,, we see that $Y_m$ is an open subset of the space of $(2m-1)$-jets of curves on $\C\!P^1\times\C\!P^1$. Therefore 
$Y_m$ is the twistor space of a quaternionic manifold $N_m$ of dimension $4m$\,. Note that, the twistor lines on $Y_m$ are images of 
sections $s$ of $\a$ such that $\b\circ s:\C\!P^1\to\C\!P^1$ has degree $m$\,. 
Furthermore, the corresponding generic submanifold $M_m\subseteq N_m$\,, of (ii) of Corollary \ref{cor:qg_sR}\,, is just the space of holomorphic maps of degree $m$\,, 
from $\C\!P^1$ to itself, which commute with the antipodal map.   
\end{exm} 

\indent 
Note that, for all the generic submanifolds of (ii) of Corollary \ref{cor:qg_sR}\,, given by Examples \ref{exm:rrs} and \ref{exm:rms}\,, 
condition (ii) of Theorem \ref{thm:charact_twist_co-cr_q} is automatically satisfied such that the corresponding rational ruled surfaces 
are the twistor spaces of co-CR quaternionic manifolds; in fact, hyper co-CR manifolds (see \cite{fq_2} for the definition of the latter). 
Therefore suitable products of these manifolds provide examples covering all possible $k$ and $l$ in Corollary \ref{cor:qg_sR}\,. 

\begin{exm}  \label{exm:quadric} 
Let $n\in\mathbb{N}\setminus\{0\}$ and let $\R^{n+3}\subseteq\R^{n+4}$ be embedded as a vector subspace; denote by $\ell\subseteq\R^{n+4}$ 
the orthogonal complement of $\R^{n+3}$, oriented so that the isomorphism $\R^{n+4}=\ell\oplus\R^{n+3}$ be orientation preserving.\\ 
\indent 
Let $M$ be the Grassmannian of three-dimensional oriented subspaces of $\R^{n+3}$, and let $N$ be the Grassmannian of four-dimensional oriented subspaces 
of $\R^{n+4}$ which are not contained by $\R^{n+3}$. It is well known that $N$ is a quaternionic manifold (it is an open subset 
of a Wolf space). Also, $M$ is both a CR quaternionic and a co-CR quaternionic manifold \cite{fq_2}\,. 
We embedd $M\subseteq N$ by $p\mapsto\ell\oplus p$\,, and define its retraction $\phi:N\to M$, $q\mapsto q\cap\R^{n+3}$, where $q\cap\R^{n+3}$ 
is oriented so that the orthogonal decomposition $q=\ell_q\oplus\bigl(q\cap\R^{n+3}\bigr)$ be orientation preserving, where 
$\ell_q=q\cap\bigl(q\cap\R^{n+3}\bigr)^{\perp}$, 
oriented so that its positive unit vector is characterised by the fact that its scalar product with the positive unit vector of $\ell$ be positive. 
To show that $\phi$ is twistorial, let $\C^{\!n+4}$ be the complexification of  $\R^{n+4}$. Then $Z(M)\subseteq\C\!P^{n+2}$ is the hyperquadric of isotropic directions 
in $\C^{\!n+3}$, whilst $Z(N)$ is the space of  two-dimensional isotropic (complex) vector subspaces of $\C^{\!n+4}$ which are not contained by $\C^{\!n+3}$. 
Then $\phi$ corresponds to the holomorphic map $\Phi:Z(N)\to Z(M)$\,, $q\mapsto q\cap\C^{\!n+3}$.\\ 
\indent 
Accordingly, the normal bundle of a twistor sphere in $Z(M)$ is $n\ol(2)$\,.\\ 
\indent 
Finally, note that $M$ is a CR quaternionic submanifold, but not a co-CR quaternionic submanifold of $N$.  
\end{exm}


\begin{thebibliography}{10}


\bibitem{Bie-TAMS06} 
R.~Bielawski, Manifolds with an ${\rm SU}(2)$-action on the tangent bundle, 
\textit{Trans. Amer. Math. Soc.}, {\bf 358} (2006) 3997--4019.  

\bibitem{GelZak-91} 
I.~M.~Gelfand, I.~Zakharevich, Webs, Veronese curves, and bi-Hamiltonian systems, 
\textit{J. Funct. Anal.}, {\bf 99} (1991) 150--178.

\bibitem{GolGui} 
M.~Golubitsky, V.~Guillemin, \textit{Stable mappings and their singularities}, 
Graduate Texts in Mathematics, vol. 14. Springer-Verlag, New York-Heidelberg, 1973.

\bibitem{GriHar} 
P.~Griffiths, J.~Harris, \textit{Principles of algebraic geometry}, Wiley Classics Library, 
John Wiley \& Sons, Inc., New York, 1978. 

\bibitem{Hit-complexmfds}
N.~J.~Hitchin, Complex manifolds and Einstein's equations,
\textit{Twistor geometry and nonlinear systems (Primorsko, 1980)}, 73--99,
Lecture Notes in Math., 970, Springer, Berlin, 1982.

\bibitem{IMOP}
S.~Ianu\c s, S.~Marchiafava, L.~Ornea, R.~Pantilie, Twistorial maps between quaternionic manifolds,
\textit{Ann. Sc. Norm. Super. Pisa Cl. Sci. (5)}, {\bf 9} (2010) 47--67.

\bibitem{Kod}
K.~Kodaira, A theorem of completeness of characteristic systems for analytic families of
compact submanifolds of complex manifolds,
\textit{Ann. of Math.}, {\bf 75} (1962) 146--162. 

\bibitem{KodSpe-I_II} 
K.~Kodaira, D.~C.~Spencer, On deformations of complex analytic structures. I, II,  
\textit{Ann. of Math. (2)}, {\bf 67} (1958) 328--466.

\bibitem{KodSpe-III} 
K.~Kodaira, D.~C.~Spencer, On deformations of complex analytic structures. III. Stability theorems for complex structures, 
\textit{Ann. of Math. (2)}, {\bf 71} (1960) 43--76. 

\bibitem{LouPan-II} 
E.~Loubeau, R~Pantilie, Harmonic morphisms between Weyl spaces and twistorial maps II, 
\textit{Ann. Inst. Fourier (Grenoble)}, {\bf 60} (2010) 433--453. 

\bibitem{fq}
S.~Marchiafava, L.~Ornea, R.~Pantilie, Twistor Theory for CR quaternionic manifolds and related structures,
\textit{Monatsh. Math.}, {\bf 167} (2012) 531--545. 

\bibitem{fq_2}
S.~Marchiafava, R.~Pantilie, Twistor Theory for co-CR quaternionic manifolds and related structures,
\textit{Israel J. Math.}, {\bf 195} (2013) 347--371. 

\bibitem{PanWoo-sd}
R.~Pantilie, J.~C.~Wood, Twistorial harmonic morphisms with one-dimensional fibres on self-dual four-manifolds,
\textit{Q. J. Math.}, {\bf 57} (2006) 105--132. 

\bibitem{Qui-QJM98}
D.~Quillen, Quaternionic algebra and sheaves on the Riemann sphere,
\textit{Q. J. Math.}, {\bf 49} (1998) 163--198. 


\end{thebibliography}
\end{document}